\documentclass[11pt]{article}

\usepackage[T1]{fontenc}
\usepackage{mathtools, amssymb, amsthm}
\usepackage[margin=2cm]{geometry}
\usepackage[hidelinks]{hyperref}

\newtheorem{theorem}{Theorem}[section]
\newtheorem{lemma}[theorem]{Lemma}

\newtheorem{corollary}[theorem]{Corollary}
\theoremstyle{definition}

\theoremstyle{remark}
\newtheorem{remark}[theorem]{Remark}
\numberwithin{equation}{section}

\newcommand{\R}{\mathbb{R}}
\newcommand{\C}{\mathbb{C}}
\newcommand{\Q}{\mathbb{Q}}
\newcommand{\Z}{\mathbb{Z}}
\newcommand{\Leb}{\mathcal L}
\newcommand{\Haus}{\mathcal H}
\newcommand{\eps}{\varepsilon}

\title{Non-rotationally invariant generalised geodesics in the disc}
\author{Maja Gw\'{o}\'{z}d\'{z}\\
ETH Z\"{u}rich\\
\texttt{mgwozdz@ethz.ch}}
\date{}

\begin{document}

\maketitle

\begin{abstract}
We study Brenier's relaxed least-action problem in the unit disc
\(D:=\{x\in\R^2:\ |x|<1\}\) at the critical time \(T=\pi\), with the endpoint
pair \(i_D\) (the identity on \(D\)) and \(-i_D\). We prove that the critical
energy shell
\[
        S^3=\{(x,v)\in\R^2\times\R^2:
        |x|^2+|v|^2=1\}
\]
supports stationary action-minimising generalised incompressible flows that
are not invariant under physical rotations. This answers the question posed by
Bernot, Figalli, and Santambrogio. Our construction relies on the normalised
surface measure \(\sigma\) on \(S^3\) and the Hopf quotient
\[
        \Pi=(N,M,L):S^3\to\mathbb S^2_{1/2},
        \qquad
        \mathbb S^2_{1/2}:=\{(n,m,\ell)\in\R^3:
        n^2+m^2+\ell^2=1/4\}.
\]
Here,
\[
        N=\frac12(x_1^2+v_1^2-x_2^2-v_2^2),\quad
        M=x_1x_2+v_1v_2,\quad
        L=x_1v_2-x_2v_1 .
\]
This quotient is a first integral of the harmonic-oscillator flow. Let \(\tau:=\Pi_\#\sigma\), let \(g\) be a bounded Borel function on \(\mathbb S^2_{1/2}\) whose \(L^\infty(\tau)\)-class is odd under \((n,m,\ell)\mapsto(n,m,-\ell)\), and let \(\delta\in\R\). If
\(1+\delta g\circ\Pi\ge0\) \(\sigma\)-a.e., then
\[
        d\mu_{\delta,g}=\pi(1+\delta g\circ\Pi)\,d\sigma
\]
has Lebesgue spatial marginal and is stationary. The induced path measure is
then minimising. We also determine exactly which members of this tilted
family are rotationally invariant. If \(\delta\ne0\), this is equivalent to
axisymmetry of the \(L^\infty(\tau)\)-class of \(g\), modulo
\(\tau\)-null sets. In particular, taking \(g(n,m,\ell)=\ell m\) with
\(0<|\delta|<8\) gives strictly positive non-rotationally invariant
minimisers.
\end{abstract}

\small
\noindent\textbf{2020 Mathematics Subject Classification.} Primary 35Q31; Secondary
58D05, 58D19, 49Q22.

\noindent\textbf{Keywords.} incompressible Euler equations, generalised incompressible
flows, Brenier's least-action principle, non-rotationally invariant minimisers,
Hopf fibration.

\section{Introduction}

Let us set
\[
        D:=\{x\in\R^2:\ |x|<1\}.
\]
We rely on Arnold's geodesic interpretation of the incompressible Euler equations
and Brenier's relaxed least-action formulation \cite{Arnold1966,Brenier1989,Brenier1993,Brenier1999}. In the disc \(D\), the
problem is the relaxed endpoint problem from \(i_D\) to \(-i_D\) on
the time interval \([0,\pi]\). In this paper, the area measure is
\(\Leb^2\!\lfloor D\), so \(\Leb^2(D)=\pi\). This implies that admissible incompressible
path measures have mass \(\pi\), and we use the same convention for
incompressible phase measures. If we divide all measures by \(\pi\), we recover the normalised-area convention. We also write \(\Haus^k\) for \(k\)-dimensional
Hausdorff measure. For a path measure, the action is
\[
        \mathcal A(\eta)
        :=
        \int_{\Omega}
        \mathcal E(\omega)\,d\eta(\omega),
        \qquad
        \Omega:=C([0,\pi],\overline D),
\]
where
\[
        \mathcal E(\omega)
        :=
        \begin{cases}
        \displaystyle
        \int_0^\pi \frac12|\dot\omega(t)|^2\,dt,
        & \omega\in H^1([0,\pi],\R^2),\\[1.2em]
        +\infty,
        & \text{otherwise.}
        \end{cases}
\]
We minimise over finite positive Borel measures \(\eta\) on \(\Omega\) such
that
\[
        (e_t)_\#\eta=\Leb^2\!\lfloor D
        \qquad\forall t\in[0,\pi],
\]
and
\[
        (e_0,e_\pi)_\#\eta=(x,-x)_\#(\Leb^2\!\lfloor D),
\]
where \(e_t(\omega)=\omega(t)\). For related geodesics in spaces of
measure-preserving plans and maps, see \cite{AF09}.

For the endpoint \(i_D\mapsto -i_D\), there are two rigid rotations, that is,
\(G_\pm(t,x)=R_{\pm t}x\). Here, we use \(R_\theta\) to denote the anticlockwise rotation through angle \(\theta\), with the convention that
\[
        R_\theta(x_1,x_2)
        =
        (x_1\cos\theta-x_2\sin\theta,\,
         x_1\sin\theta+x_2\cos\theta).
\]
The maps \(G_\pm\) are calibrated by \(p(x)=|x|^2/2\). More precisely, whenever
\(\gamma\) is absolutely continuous with \(\gamma(0)=a\) and \(\gamma(\pi)=-a\), then
\[
        \int_0^\pi
        \left(\frac12|\dot\gamma(t)|^2-\frac12|\gamma(t)|^2\right)\,dt
        \ge0,
\]
where equality holds exactly for curves of the form \(a\cos t+b\sin t\). This
basic calibration is an optimality result we will need below. We write
\[
        \Phi_t(x,v):=x\cos t+v\sin t,
        \qquad
        \Phi(x,v)(t):=\Phi_t(x,v),
\]
and
\[
        \phi_t(x,v):=
        (x\cos t+v\sin t,\,-x\sin t+v\cos t).
\]
We also write
\[
        \pi_x:\R^2_x\times\R^2_v\to\R^2_x,
        \qquad \pi_x(x,v)=x,
\]
for the projection onto space. The previous parametrisation is the harmonic-oscillator example of Brenier's invariant phase-measure criterion \cite[Proposition 6.1]{Brenier1989}. It is also the parametrisation used in \cite[Lemma 2.3 and Section 4]{BFS}, up to the normalisation of Lebesgue measure and the open/closed-disc convention. For the pressure \(p(x)=|x|^2/2\), the calibrated curves are completely described by their initial positions and velocities. We shall use the following result in the
mass-\(\pi\) convention. Let \(\mu\) be a finite phase measure on the critical
shell with mass \(\pi\). Suppose that the oscillator paths remain in
\(\overline D\) for \(\mu\)-almost every initial condition and that
\[
        (\pi_x\circ\phi_t)_\#\mu=\Leb^2\!\lfloor D
        \qquad\forall t\in[0,\pi].
\]
It follows that \(\Phi_\#\mu\) is an admissible minimiser, which we prove below.
We thus reduce the problem to the construction of stationary phase measures on the
critical shell which keep the spatial marginal fixed but do not preserve physical rotations. The available freedom lies in non-axisymmetric first-integral densities on the
critical shell. The spatial marginal remains due to the fact that the oddness of the Hopf
variable \(L\) cancels on the velocity fibres. In the normalised convention of
\cite[Problem 1.2 and (1.73)]{DaneriFigalli2013}, the spatial Lebesgue measure
has total mass one, and the respective probability shell law is \(\sigma\). In our convention, the isotropic shell law is \(\mu_0=\pi\sigma\).

On the critical shell, the oscillator flow admits the Hopf quotient
\[
        \Pi=(N,M,L):S^3\to\mathbb S^2_{1/2}.
\]
Observe that this quotient is constant along oscillator orbits. It is the standard Hopf
fibration in oscillator variables, with target sphere of radius \(1/2\). Let us
compare it with
\[
        \pi_P(x,v)=\left(|x|^2-\frac12,\,x\cdot v\right)
\]
from \cite{BFS}. The map \(\pi_P\) is adapted to
rotationally invariant disintegrations, whereas \(\Pi\) is adapted to
stationary disintegrations. We deduce that the two quotients describe different symmetries of the same critical shell. In the case when the tilt is odd in \(L\), the spatial
marginal is preserved by fibrewise cancellation under \(v\mapsto -v\).
This allows for non-axisymmetric tilts on the Hopf sphere, which break physical rotational invariance while keeping incompressibility intact.

We say that a phase measure \(\mu\) is incompressible if
\[
        (\pi_x\circ\phi_t)_\#\mu=\Leb^2\!\lfloor D
        \qquad\forall t\in[0,\pi]
\]
holds. Following \cite[Definitions 4.1--4.2]{BFS}, we say that \(\mu\) is
stationary if \((\phi_t)_\#\mu=\mu\), and rotationally invariant if
\[
        (\overline R_\theta)_\#\mu=\mu
        \qquad\forall\theta,
        \qquad
        \overline R_\theta(x,v):=(R_\theta x,R_\theta v),
\]
where \(R_\theta\) is the anticlockwise rotation of \(\R^2\) through
angle \(\theta\). Bernot, Figalli, and Santambrogio asked whether the endpoint problem
\(i_D\mapsto -i_D\) has a non-rotationally invariant geodesic
\cite[p. 144, before Section 4.1]{BFS}. Daneri--Figalli reiterate the same question in terms of non-rotationally invariant minimal measures \cite[p. 37, end of Section 1.4.4]{DaneriFigalli2013}. We show that such examples occur already in the stationary single-shell class
\[
        S^3=\{(x,v): |x|^2+|v|^2=1\}.
\]
In our main result, Theorem \ref{thm:first-integral-tilts}, we establish a first-integral tilting criterion and, within the same family, the exact criterion for rotational invariance. The choice \(g(n,m,\ell)=\ell m\) gives the density \(1+\varepsilon LM\), which is strictly positive for \(0<|\varepsilon|<8\). This yields the polynomial example in
Corollary \ref{cor:polynomial-example}.

We note that the uniqueness results with fixed orientation of \cite[Corollary 4.6 and Theorem 4.20]{BFS} use different hypotheses. Our examples are stationary and supported on \(S^3\), but every strictly positive member of our family charges both orientations. We compare the normalisations and orientation classes in Section \ref{sec:BFS-comparison}.

In Section 2, we analyse the Hopf quotient and the rotation action induced on
it, and in Section 3, we prove the tilting criterion. Section 4 derives the
infinite-dimensional family and the polynomial example. Finally, in Section 5, we
compare the construction with the orientation-fixed uniqueness results of
Bernot--Figalli--Santambrogio \cite{BFS}.

\section{The Hopf quotient of the oscillator flow}

We begin with the analysis of the geometry of the critical shell. In the first part, we rely on surface disintegration, which also gives the spatial marginal. We then apply the Hopf quotient and the observation that it is constant along oscillator orbits and, additionally, keeps track of the remaining variables on the shell. Finally, we compute the action induced by physical rotations on this quotient. To this end, we set
\[
        S^3:=\{(x,v)\in\R^2\times\R^2:\ |x|^2+|v|^2=1\},
\]
and let
\[
        \sigma:=\frac{1}{\Haus^3(S^3)}\Haus^3\!\lfloor S^3
\]
be the normalised surface measure. This implies \(\sigma(S^3)=1\), and
\(\Haus^3(S^3)=2\pi^2\). Notice that the set \(S^3\cap(\partial D\times\R^2)\) is \(\sigma\)-null. Therefore, every absolutely continuous shell measure below may be analysed either on \(S^3\) or on \(S^3\cap(D\times\R^2)\), and this convention does not affect push-forwards or the action.

For later use, we write
\[
        x\wedge v:=x_1v_2-x_2v_1 .
\]
Indeed, suppose that \((x,v)\in S^3\) and that
\(t\mapsto x\cos t+v\sin t\) reaches \(\partial D\). It follows that the Gram matrix of
\((x,v)\) has eigenvalue \(1\). Its trace is \(1\), so its other eigenvalue is \(0\). We then obtain \(x\wedge v=0\). Hence the boundary-hitting oscillators are contained in
\(\{x\wedge v=0\}\). Moreover, the function \((x,v)\mapsto x\wedge v\) is non-zero and
real-analytic on \(S^3\). Its zero set has zero \(\Haus^3\)-measure. This implies that
\(\{x\wedge v=0\}\), including the lower-dimensional fibres \(x=0\) and
\(|x|=1\), is \(\sigma\)-null.

\begin{lemma}[Surface disintegration and spatial marginal]
\label{lem:surface-disintegration}
On the full-measure, we set \(\{0<|x|<1\}\), and use the coordinates
\[
        x=r(\cos\theta,\sin\theta),
        \qquad
        v=\sqrt{1-r^2}(\cos\alpha,\sin\alpha),
\]
where \(r\in(0,1)\) and \(\theta,\alpha\in[0,2\pi)\). We obtain
\[
        d\sigma
        =
        \frac{1}{2\pi^2}\,r\,dr\,d\theta\,d\alpha.
\]
It follows that
\[
        (\pi_x)_\#\sigma=\frac1\pi \Leb^2\!\lfloor D,
\]
and, with
\[
        C_x:=\{v\in\R^2:\ |v|=\sqrt{1-|x|^2}\},
\]
\[
        \pi\sigma(dx,dv)
        =
        \frac{1}{2\pi\sqrt{1-|x|^2}}\,
        (\Haus^1\!\lfloor C_x)(dv)
        \,\Leb^2\!\lfloor D(dx).
\]
\end{lemma}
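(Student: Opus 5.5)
The plan is to compute the Hausdorff measure directly from the parametrisation, viewing \(S^3\cap\{0<|x|<1\}\) as the image of the chart
\[
        F(r,\theta,\alpha)=\bigl(r\cos\theta,\,r\sin\theta,\,\rho\cos\alpha,\,\rho\sin\alpha\bigr),
        \qquad \rho:=\sqrt{1-r^2},
\]
on \((0,1)\times[0,2\pi)^2\). This map is injective and smooth, and its image has full \(\sigma\)-measure. The image condition follows from the preliminary discussion: the sets \(\{x=0\}\) and \(\{|x|=1\}\) lie inside \(\{x\wedge v=0\}\) and are \(\sigma\)-null. By the area formula, \(\Haus^3\lfloor S^3\) is the push-forward of \(\sqrt{\det(DF^TDF)}\,dr\,d\theta\,d\alpha\).

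The key steps are as follows.
\begin{enumerate}
\item Compute the partial derivatives \(\partial_r F=(\cos\theta,\sin\theta,\rho'\cos\alpha,\rho'\sin\alpha)\) with \(\rho'=-r/\rho\), \(\partial_\theta F=(-r\sin\theta,r\cos\theta,0,0)\) and \(\partial_\alpha F=(0,0,-\rho\sin\alpha,\rho\cos\alpha)\). These are mutually orthogonal, with squared norms \(1+r^2/\rho^2=1/\rho^2\), \(r^2\) and \(\rho^2\). The Jacobian is therefore \((1/\rho)\cdot r\cdot\rho=r\).
\item Integrate to get the total mass \(\int r\,dr\,d\theta\,d\alpha=\tfrac12(2\pi)^2=2\pi^2\). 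This is consistent with \(\Haus^3(S^3)=2\pi^2\), and dividing by it gives the claimed density \(\frac{1}{2\pi^2}r\,dr\,d\theta\,d\alpha\).
\item Obtain the spatial marginal by integrating out \(\alpha\), which gives \(\frac{1}{\pi}r\,dr\,d\theta\). In polar coordinates this is \(\frac1\pi\Leb^2\lfloor D\), after noting that \(x=0\) is null for both measures.
\item Obtain the disintegration by writing \(\pi\sigma=\frac{1}{2\pi}\,d\alpha\;r\,dr\,d\theta\). On the circle \(C_x\), which has radius \(\rho=\sqrt{1-|x|^2}\), the arclength element is \(\Haus^1\lfloor C_x=\rho\,d\alpha\). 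Hence \(\frac{d\alpha}{2\pi}=\frac{1}{2\pi\rho}\Haus^1\lfloor C_x\), which is the stated formula. As a check, each conditional measure has mass \(2\pi\rho/(2\pi\rho)=1\).
\end{enumerate}

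The only delicate point is justifying the use of the area formula with this chart, namely injectivity and that the omitted set is null. I would handle it by citing the null-set remark above. Alternatively, one can verify it directly: the complement consists of the two circles \(\{x=0\}\) and \(\{v=0\}\), which have dimension one and hence zero \(\Haus^3\)-measure. Everything else is routine computation.
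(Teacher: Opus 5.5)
Your proposal is correct and is essentially the paper's proof. The paper likewise reads off the diagonal Gram matrix $\operatorname{diag}\bigl((1-r^2)^{-1},r^2,1-r^2\bigr)$ of this chart to get $d\Haus^3\!\lfloor S^3=r\,dr\,d\theta\,d\alpha$, normalises by $2\pi^2$, integrates out $\alpha$ for the marginal, and uses $d\Haus^1\!\lfloor C_x=\sqrt{1-|x|^2}\,d\alpha$ for the disintegration, while your explicit check of injectivity and of the nullity of the two omitted circles only spells out what the paper leaves implicit.
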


\begin{proof}
In these coordinates, the induced metric has Gram matrix
\[
        \operatorname{diag}\bigl((1-r^2)^{-1},r^2,1-r^2\bigr),
\]
and determinant \(r^2\). It follows that
\[
        d\Haus^3\!\lfloor S^3=r\,dr\,d\theta\,d\alpha.
\]
Since \(\Haus^3(S^3)=2\pi^2\), this gives the formula for \(d\sigma\). If we now integrate in \(\alpha\), we obtain the spatial marginal. Because \(d\Haus^1\!\lfloor C_x=\sqrt{1-|x|^2}\,d\alpha\) holds, the disintegration of \(\pi\sigma\) follows.
\end{proof}

In particular, it follows that
\[
        \mu_0:=\pi\sigma
\]
has spatial marginal \(\Leb^2\!\lfloor D\). In probability normalisation, Brenier's shell law \cite[(6.15)]{Brenier1989} is
\[
        \pi^{-2}\delta(|x|^2+|v|^2-1)\,dx\,dv .
\]
From
\[
        \delta(|y|^2-1)\,dy
        =
        \frac12\,\Haus^3\!\lfloor S^3(dy)
        \qquad\text{in }\R^4,
\]
we infer that this probability shell law is exactly \(\sigma\). Multiplication by \(\pi\) gives the mass-\(\pi\) convention we use here. We now pass from the shell to paths. To this end, let us define the following continuous map
\[
        \Phi:S^3\to C([0,\pi],\overline D),
        \qquad
        \Phi(x,v)(t):=x\cos t+v\sin t.
\]
For \((x,v)\in S^3\), by Cauchy--Schwarz, we obtain
\[
        |x\cos t+v\sin t|
        \le (|x|^2+|v|^2)^{1/2}
             (\cos^2t+\sin^2t)^{1/2}
        =1,
\]
so \(\Phi\) takes values in \(C([0,\pi],\overline D)\). We now introduce the quadratic quantities
\[
        N(x,v):=\frac12(x_1^2+v_1^2-x_2^2-v_2^2),
\]
\[
        M(x,v):=x_1x_2+v_1v_2,
        \qquad
        L(x,v):=x_1v_2-x_2v_1,
\]
and
\[
        \Pi:S^3\to\R^3,
        \qquad
        \Pi(x,v):=(N(x,v),M(x,v),L(x,v)).
\]
We also set
\[
        \mathbb S^2_{1/2}:=
        \{(n,m,\ell)\in\R^3:\ n^2+m^2+\ell^2=1/4\}.
\]

\begin{lemma}[Hopf quotient]\label{lem:hopf-quotient}
The map \(\Pi\) sends \(S^3\) onto \(\mathbb S^2_{1/2}\), and
\[
        N^2+M^2+L^2=\frac14
        \qquad\text{on }S^3.
\]
It is invariant under the oscillator flow:
\[
        \Pi\circ\phi_t=\Pi
        \qquad\forall t\in\R,
\]
and the fibres of \(\Pi\) are exactly the oscillator orbits.
\end{lemma}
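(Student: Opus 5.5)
The cleanest route is to identify $\R^2_x\times\R^2_v$ with $\C^2$ so that the oscillator flow becomes a diagonal phase rotation and $\Pi$ becomes the standard Hopf map. Set
\[
        z_1:=x_1+iv_1,\qquad z_2:=x_2+iv_2 .
\]
Then $\phi_t$ acts by $(z_1,z_2)\mapsto(e^{-it}z_1,e^{-it}z_2)$: indeed the new $x_j$ is $x_j\cos t+v_j\sin t$ and the new $v_j$ is $-x_j\sin t+v_j\cos t$, which is exactly $\operatorname{Re}$ and $\operatorname{Im}$ of $e^{-it}(x_j+iv_j)$. Hence every oscillator orbit on $S^3=\{|z_1|^2+|z_2|^2=1\}$ is a Hopf circle $\{e^{is}(z_1,z_2)\}$. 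Next I express $N,M,L$ through $z_1,z_2$:
\[
        N=\tfrac12\bigl(|z_1|^2-|z_2|^2\bigr),\qquad
        z_1\overline{z_2}=(x_1x_2+v_1v_2)+i(v_1x_2-x_1v_2)=M-iL .
\]
So $\Pi$ is, up to the sign of $L$ and the factor $\tfrac12$, the Hopf map $(z_1,z_2)\mapsto\bigl(\tfrac12(|z_1|^2-|z_2|^2),\,z_1\overline{z_2}\bigr)$.

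With this dictionary, the identity $N^2+M^2+L^2=\tfrac14$ reduces to
\[
        N^2+|z_1\overline{z_2}|^2=\tfrac14(|z_1|^2-|z_2|^2)^2+|z_1|^2|z_2|^2=\tfrac14(|z_1|^2+|z_2|^2)^2=\tfrac14
\]
on $S^3$. Invariance $\Pi\circ\phi_t=\Pi$ follows because $|z_j|^2$ and $z_1\overline{z_2}$ are unchanged under multiplication of both coordinates by the common phase $e^{-it}$. One could instead verify $\frac{d}{dt}$ of each quadratic vanishes, but the complex form avoids that computation.

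For surjectivity, take $(n,m,\ell)\in\mathbb S^2_{1/2}$. Put $|z_1|^2=\tfrac12+n$ and $|z_2|^2=\tfrac12-n$; both are nonnegative since $|n|\le\tfrac12$, and they sum to $1$. Choose $z_1=\sqrt{1/2+n}\ge0$. If $z_1\neq0$, set $z_2=\overline{(m-i\ell)}/z_1$. Then
\[
        |z_2|^2=\frac{m^2+\ell^2}{\tfrac12+n}=\frac{\tfrac14-n^2}{\tfrac12+n}=\tfrac12-n,
\]
so the point lies on $S^3$ and $z_1\overline{z_2}=m-i\ell$, as required. If $z_1=0$, then $n=-\tfrac12$ and $m=\ell=0$, and $(0,1)$ works.

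For the fibres, one inclusion is the invariance already proved: each orbit lies in a single fibre. Conversely, suppose $\Pi(z)=\Pi(w)$ with $z,w\in S^3$. Then $|z_j|=|w_j|$ for $j=1,2$ and $z_1\overline{z_2}=w_1\overline{w_2}$. If $z_1\neq0$, write $w_1=e^{is}z_1$. The equality $z_1\overline{z_2}=w_1\overline{w_2}$ then gives $\overline{w_2}=e^{-is}\overline{z_2}$, so $w=e^{is}z=\phi_{-s}(z)$. The case $z_1=0$ is handled the same way using $z_2\neq0$. The only real obstacle is bookkeeping: fixing the orientation of the phase and the sign of $L$ correctly so that $\phi_t$ really is multiplication by $e^{-it}$ and $z_1\overline{z_2}=M-iL$. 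Once those two identities are checked, every claim is immediate.
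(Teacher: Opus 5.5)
Your proof is correct and follows the paper's argument essentially line by line. It uses the same coordinates $z_j=x_j+iv_j$, the same identities $N=\frac12(|z_1|^2-|z_2|^2)$ and $z_1\overline{z_2}=M-iL$, the same explicit preimage for surjectivity, and the same common-phase argument for invariance. In the fibre step you match phases coordinate by coordinate, whereas the paper appeals to equality of the rank-one Hermitian matrices $zz^*=ww^*$; this is only a cosmetic difference.
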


\begin{proof}
Let us set
\[
        z_1:=x_1+i v_1,
        \qquad
        z_2:=x_2+i v_2.
\]
This implies that \(|z_1|^2+|z_2|^2=1\) on \(S^3\), while
\[
        N=\frac12(|z_1|^2-|z_2|^2),
        \qquad
        z_1\overline{z_2}=M-iL.
\]
It follows that
\[
\begin{aligned}
        N^2+M^2+L^2
        &=
        \frac14(|z_1|^2-|z_2|^2)^2+|z_1|^2|z_2|^2  \\
        &=
        \frac14(|z_1|^2+|z_2|^2)^2
        =
        \frac14.
\end{aligned}
\]
For the other direction, let \((n,m,\ell)\in\mathbb S^2_{1/2}\). If \(1/2+n>0\), we set
\[
        z_1=\sqrt{\frac12+n},
        \qquad
        z_2=\frac{m+i\ell}{z_1}.
\]
We obtain \(z_1\overline{z_2}=m-i\ell\), and \(|z_2|^2=(m^2+\ell^2)/(1/2+n)=1/2-n\). If \(n=\pm1/2\), then \(m=\ell=0\). The two endpoint cases follow by taking
\((z_1,z_2)=(1,0)\) and \((z_1,z_2)=(0,1)\), respectively. If we now write \(z_j=x_j+i v_j\), we obtain a point of \(S^3\) with Hopf coordinates \((n,m,\ell)\). Observe that under \(\phi_t\), both complex coordinates acquire the same phase, that is,
\[
        z_j\mapsto e^{-it}z_j.
\]
It follows that \(|z_1|^2\), \(|z_2|^2\), and \(z_1\overline{z_2}\) are
invariant. Hence \(\Pi\circ\phi_t=\Pi\).

It remains to identify the fibres. Let \(z,w\in S^3\subset\C^2\) have the same
Hopf coordinates. Then
\[
        |z_1|^2=|w_1|^2,\qquad |z_2|^2=|w_2|^2,
        \qquad z_1\overline{z_2}=w_1\overline{w_2}.
\]
In other terms, \(zz^*=ww^*\) as rank-one Hermitian matrices. Since
\(|z|=|w|=1\), we obtain \(w=e^{i\tau}z\) for some \(\tau\in\R\). We conclude that \(z\)
and \(w\) lie on the same oscillator orbit.
\end{proof}

In the next lemma, we analyse the push-forward of \(\sigma\) by the Hopf quotient.

\begin{lemma}[Hopf push-forward of surface measure]\label{lem:hopf-pushforward}
Let
\[
        \tau:=\Pi_\#\sigma.
\]
It follows that
\[
        \tau
        =
        \frac{\Haus^2\!\lfloor \mathbb S^2_{1/2}}
        {\Haus^2(\mathbb S^2_{1/2})}.
\]
If \(P(n,m,\ell):=(n,m)\), then
\[
        P_\#\tau
        =
        \frac{\mathbf 1_{\{n^2+m^2<1/4\}}}
        {\pi\sqrt{1/4-n^2-m^2}}\,dn\,dm.
\]
In particular, \(P_\#\tau\) and \(\Leb^2\!\lfloor \overline B_{1/2}(0)\) have the same null sets.
\end{lemma}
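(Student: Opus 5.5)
Our plan is to prove the first claim, that \(\tau\) is uniform on \(\mathbb S^2_{1/2}\), by equivariance, and then to get the formula for \(P_\#\tau\) by Archimedes' projection principle.

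\emph{Step 1: symmetry.} Identify \(S^3\subset\C^2\) via \(z_j=x_j+iv_j\), as in the proof of Lemma \ref{lem:hopf-quotient}. Then \(\Pi(z)\) is the trace-free part of the rank-one Hermitian matrix \(zz^*\), written in Pauli coordinates: the diagonal gives \(N\) and the off-diagonal entry \(z_1\overline{z_2}=M-iL\) gives \(M\) and \(L\). For \(U\in SU(2)\) we have \((Uz)(Uz)^*=U(zz^*)U^*\). The adjoint action of \(SU(2)\) on trace-free Hermitian matrices is the standard surjection \(SU(2)\to SO(3)\). Hence
\[
\Pi\circ U=\rho(U)\circ\Pi,\qquad \rho(U)\in SO(3)\ \text{acting on }(n,m,\ell),
\]
with \(\rho\) surjective onto \(SO(3)\). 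Because \(U\) is a real isometry of \(\R^4\) preserving \(S^3\), we have \(U_\#\sigma=\sigma\). Therefore
\[
\rho(U)_\#\tau=\Pi_\#U_\#\sigma=\tau
\]
for all rotations in \(SO(3)\). By Lemma \ref{lem:hopf-quotient}, \(\tau\) is a probability measure on \(\mathbb S^2_{1/2}\). The only \(SO(3)\)-invariant Borel probability on the sphere is the normalised surface measure, since the rotation group acts transitively and uniqueness of invariant measures on homogeneous spaces applies.

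The step that needs care is checking that the correspondence \((n,m,\ell)\leftrightarrow\) trace-free Hermitian matrix is linear and isometric (up to scale). Only then does conjugation become a genuine element of \(SO(3)\) on \(\R^3\), so the sign convention \(M-iL\) must be tracked. We will write
\[
zz^*-\tfrac12 I=\begin{pmatrix} n & m-i\ell\\ m+i\ell & -n\end{pmatrix},
\]
whose squared Frobenius norm is \(2(n^2+m^2+\ell^2)\). Conjugation preserves both that norm and the trace-free Hermitian space, so it acts by orthogonal maps. Since \(SU(2)\) is connected, these maps lie in \(SO(3)\). Surjectivity of \(\rho\) is not actually needed: transitivity on the sphere suffices. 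For example, the diagonal phases \(\operatorname{diag}(e^{i\beta},e^{-i\beta})\) rotate the \((m,\ell)\)-plane, and real rotations in \(SU(2)\) rotate the \((n,m)\)-plane, and together these act transitively.

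\emph{Step 2: projection.} By Archimedes' theorem, the uniform measure on a sphere of radius \(\rho_0=1/2\) in \(\R^3\) has height coordinate \(\ell\) uniform on \([-1/2,1/2]\). Equivalently, we compute directly. The upper and lower hemispheres are graphs \(\ell=\pm\sqrt{1/4-n^2-m^2}\) over the open disc, with area element
\[
\frac{\rho_0}{\sqrt{\rho_0^2-n^2-m^2}}\,dn\,dm .
\]
Summing the two sheets and dividing by \(\Haus^2(\mathbb S^2_{1/2})=4\pi\rho_0^2=\pi\) gives the density
\[
\frac{2\cdot\tfrac12}{\pi\sqrt{1/4-n^2-m^2}},
\]
which matches the claim. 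The equator \(\{\ell=0\}\) is \(\Haus^2\)-null, and so is its image \(\{n^2+m^2=1/4\}\), so nothing is lost. Since the density is strictly positive and finite on the open disc, and the boundary circle is Lebesgue-null, \(P_\#\tau\) and \(\Leb^2\!\lfloor\overline B_{1/2}(0)\) have the same null sets.

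As a cross-check of Step 1, we can compute directly from Lemma \ref{lem:surface-disintegration}. There \(N=\tfrac12(|z_1|^2-|z_2|^2)\), and in Hopf-type coordinates \(|z_1|=\cos\chi\) the measure is \(\propto\sin\chi\cos\chi\,d\chi\,d\phi_1\,d\phi_2\). This makes \(N\) uniform on \([-1/2,1/2]\) and the relative phase uniform, which is again Archimedes' characterisation of uniform measure on the sphere. We would use this as a fallback proof if the equivariance bookkeeping becomes messy.
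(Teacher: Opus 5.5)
Your proof is correct. Step 2 is exactly the paper's two-sheet graph computation, but for the uniformity of \(\tau\) you take a genuinely different route.

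The paper argues by direct computation. In Hopf coordinates \(z_1=e^{ia}\cos u\), \(z_2=e^{ib}\sin u\) it writes \(d\sigma=\frac{1}{2\pi^2}\sin u\cos u\,du\,da\,db\), expresses \((N,M,L)\) through \(u\) and \(\beta=b-a\), and integrates out \(a\) to get \(d\tau=\frac{1}{2\pi}|dn|\,d\beta\). It then recognises this as normalised area in the cylindrical coordinates \((n,\beta)\) of the sphere of radius \(1/2\). That is essentially the computation you kept only as a fallback.

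You instead combine three ingredients: the equivariance \(\Pi\circ U=\rho(U)\circ\Pi\) induced by \(zz^*\mapsto Uzz^*U^*\), the invariance \(U_\#\sigma=\sigma\), and uniqueness of the invariant probability measure under a compact group acting transitively on the sphere. The points you flag all check out:
\begin{itemize}
\item the squared Frobenius norm \(2(n^2+m^2+\ell^2)\) makes \(\rho(U)\) orthogonal, whatever the sign convention in \(M-iL\);
\item diagonal phases rotate the \((m,\ell)\)-plane;
\item the real rotations in \(SU(2)\) are exactly the physical rotations \(\overline R_\theta\), acting by \(\rho_\theta\) as in Lemma \ref{lem:rotation-quotient};
\item since \(\rho(SU(2))\) is compact, transitivity suffices for uniqueness (average a continuous test function over the group).
\end{itemize}

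Your route is computation-free and explains why \(\tau\) must be uniform. The paper's route is fully elementary and avoids any Haar-type uniqueness theorem. It also yields explicit coordinates in which the uniform law along the oscillator fibres (the variable \(a\)) is visible.

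One small slip: the Hopf-coordinate density in your cross-check is not supplied by Lemma \ref{lem:surface-disintegration}, which uses the coordinates \((r,\theta,\alpha)\). It needs its own (standard) Jacobian computation, as in the paper.
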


\begin{proof}
Let us use the coordinates
\[
\begin{gathered}
        z_1=e^{ia}\cos u,\qquad z_2=e^{ib}\sin u,\\
        0<u<\frac{\pi}{2},\qquad a,b\in[0,2\pi).
\end{gathered}
\]
In these coordinates,
\[
        d\sigma
        =
        \frac{1}{2\pi^2}\sin u\cos u\,du\,da\,db.
\]
We set \(\beta:=b-a\). Since
\[
        z_1\overline{z_2}
        =
        \frac12\sin(2u)e^{-i\beta},
\]
we have
\[
\begin{gathered}
        N=\frac12\cos(2u),\\
        M=\frac12\sin(2u)\cos\beta,\qquad
        L=\frac12\sin(2u)\sin\beta.
\end{gathered}
\]
We now change the variables from \((a,b)\) to \((a,\beta)\), and then integrate in
\(a\), which yields
\[
        d\tau
        =
        \frac{1}{2\pi}\sin(2u)\,du\,d\beta.
\]
Since \(dn=-\sin(2u)\,du\), this becomes
\[
        d\tau
        =
        \frac{1}{2\pi}\,|dn|\,d\beta .
\]
In the coordinates \((n,\beta)\), the surface element on \(\mathbb S^2_{1/2}\) is \(\frac12\,|dn|\,d\beta\). Since \(\Haus^2(\mathbb S^2_{1/2})=\pi\), we have obtained normalised surface measure on \(\mathbb S^2_{1/2}\).

It remains to compute the projected density. Let us first write the sphere over the
\((n,m)\)-disc as the two graphs
\[
        \ell=\pm\sqrt{1/4-n^2-m^2}.
\]
On each sheet, the surface-area factor is
\[
        \frac{1/2}{\sqrt{1/4-n^2-m^2}}\,dn\,dm.
\]
It now suffices to sum the two sheets and divide by \(\Haus^2(\mathbb S^2_{1/2})=\pi\) to obtain the desired density for \(P_\#\tau\).
\end{proof}

Finally, we compute the physical rotation action in Hopf variables.

\begin{lemma}[Physical rotations on the quotient]\label{lem:rotation-quotient}
Let \(R_\theta\) be anticlockwise rotation of \(\R^2\) through angle
\(\theta\), and let
\[
        \overline R_\theta(x,v):=(R_\theta x,R_\theta v).
\]
It follows that
\[
        \Pi\circ\overline R_\theta=\rho_\theta\circ\Pi,
\]
where
\[
        \rho_\theta(n,m,\ell)
        =
        (n\cos2\theta-m\sin2\theta,\,
        n\sin2\theta+m\cos2\theta,\,
        \ell).
\]
\end{lemma}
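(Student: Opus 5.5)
The plan is to reuse the complex coordinates from the proof of Lemma~\ref{lem:hopf-quotient}, \(z_j=x_j+iv_j\), and to observe that \(\overline R_\theta\) acts as a real rotation on the pair \((z_1,z_2)\). Since \(R_\theta\) is a real matrix acting identically on \(x\) and on \(v\), we get
\[
        (z_1,z_2)\mapsto(z_1\cos\theta-z_2\sin\theta,\ z_1\sin\theta+z_2\cos\theta),
\]
with real coefficients. This is an element of \(SO(2)\subset SU(2)\) acting on \(\C^2\).

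The key steps are then short computations with this substitution. Write \(z_1'\), \(z_2'\) for the transformed coordinates and set \(c=\cos\theta\), \(s=\sin\theta\).

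First, the invariant \(L\). We have
\[
        z_1'\overline{z_2'}=(cz_1-sz_2)(s\overline{z_1}+c\overline{z_2})
        = cs(|z_1|^2-|z_2|^2)+c^2z_1\overline{z_2}-s^2z_2\overline{z_1}.
\]
Since \(z_1\overline{z_2}=M-iL\), we have \(z_2\overline{z_1}=M+iL\). The imaginary part of the right-hand side is therefore \(-c^2L-s^2L=-L\), so \(L'=L\).

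Second, \(M\). The real part of the same expression is
\[
        cs\cdot 2N+(c^2-s^2)M=N\sin2\theta+M\cos2\theta,
\]
which is the claimed second component.

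Third, \(N\). We compute
\[
        |z_1'|^2-|z_2'|^2=(c^2-s^2)(|z_1|^2-|z_2|^2)-4cs\,\operatorname{Re}(z_1\overline{z_2}).
\]
This gives \(N'=N\cos2\theta-M\sin2\theta\).

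As a sanity check, \(L=x\wedge v\) is a determinant-type quantity. It is preserved because \(\det R_\theta=1\), which is consistent with the first step.

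The only real obstacle is sign bookkeeping, in particular the convention \(z_1\overline{z_2}=M-iL\), where the minus sign enters \(L\). I would therefore double-check \(N'\) and \(M'\) with a direct real computation at \(\theta=\pi/4\), or by verifying the infinitesimal generator at \(\theta=0\). Alternatively, one can verify directly in real variables that \(M'\) and \(N'\) are as claimed, by expanding \(R_\theta x\) and \(R_\theta v\) and using double-angle formulas. Both routes are routine.
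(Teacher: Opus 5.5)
Your proposal is correct, and your computations of \(z_1'\overline{z_2'}\) and \(|z_1'|^2-|z_2'|^2\) give exactly the components of \(\rho_\theta\) with the right signs. It is essentially the paper's argument, namely a direct computation of how the quadratic forms transform under \(\overline R_\theta\): the paper works with the real quantities \(x_1^2+v_1^2\), \(x_2^2+v_2^2\), \(x_1x_2+v_1v_2\) and gets \(L\) fixed from \(\det R_\theta=1\), while you carry out the same computation in the complex coordinates \(z_j=x_j+iv_j\) and read off \(L'=L\) from the imaginary part.
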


\begin{proof}
Let us set
\[
        a:=x_1^2+v_1^2,\qquad b:=x_2^2+v_2^2,\qquad c:=x_1x_2+v_1v_2.
\]
We obtain \(N=(a-b)/2\) and \(M=c\). We then apply the same rotation to \(x\) and \(v\),
which yields
\[
        N\mapsto N\cos2\theta-M\sin2\theta,
        \qquad
        M\mapsto N\sin2\theta+M\cos2\theta.
\]
Notice that the determinant \(L=x_1v_2-x_2v_1\) is unchanged by physical rotations. We conclude that \(L\) is fixed.
\end{proof}

\begin{remark}[Axisymmetry modulo null sets]
Let \(\tau=\Pi_\#\sigma\). We say that a bounded Borel function \(g:\mathbb S^2_{1/2}\to\R\) is axisymmetric modulo \(\tau\)-null sets if, for every \(\theta\in\R\), it satisfies
\[
        g=g\circ\rho_\theta
        \qquad \text{in }L^\infty(\mathbb S^2_{1/2},\tau).
\]
The exceptional null set may depend on \(\theta\).
\end{remark}

\section{First-integral tilts}

We now establish the tilting criterion, which requires the following calibration estimate on \([0,\pi]\).

\begin{lemma}[Pathwise calibration]
Let \(a\in\R^2\), and let \(\gamma\in H^1([0,\pi],\R^2)\) satisfy
\[
        \gamma(0)=a,
        \qquad
        \gamma(\pi)=-a.
\]
It follows that
\[
        J(\gamma):=
        \int_0^\pi
        \left(
        \frac12|\dot\gamma(t)|^2-\frac12|\gamma(t)|^2
        \right)\,dt
        \ge 0.
\]
Moreover, equality holds exactly for curves of the form
\[
        \gamma(t)=a\cos t+b\sin t,
        \qquad b\in\R^2.
\]
\end{lemma}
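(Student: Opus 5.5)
Let me set \(\gamma_0(t):=a\cos t\), which satisfies the same boundary conditions, and write \(\gamma=\gamma_0+h\) with \(h\in H^1_0([0,\pi],\R^2)\), since \(h(0)=h(\pi)=0\). Expanding the quadratic functional gives
\[
        J(\gamma)=J(\gamma_0)+\int_0^\pi\bigl(\dot\gamma_0\cdot\dot h-\gamma_0\cdot h\bigr)\,dt+J(h).
\]
The cross term vanishes after integrating by parts. Indeed, \(\ddot\gamma_0=-\gamma_0\) and \(h\) vanishes at both endpoints, so
\[
        \int_0^\pi\dot\gamma_0\cdot\dot h\,dt=-\int_0^\pi\ddot\gamma_0\cdot h\,dt=\int_0^\pi\gamma_0\cdot h\,dt .
\]
For the first term, a direct computation gives \(J(\gamma_0)=\frac{|a|^2}{2}\int_0^\pi(\sin^2t-\cos^2t)\,dt=-\frac{|a|^2}{2}\int_0^\pi\cos 2t\,dt=0\). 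Hence \(J(\gamma)=J(h)\), and it remains to analyse \(J\) on \(H^1_0\).

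Next I apply the sharp Wirtinger (Poincaré) inequality on \([0,\pi]\) to each component of \(h\). Expanding \(h_j(t)=\sum_{k\ge1}c_{j,k}\sin(kt)\) in the sine basis, which is complete and orthogonal both in \(L^2\) and for the Dirichlet form on \(H^1_0(0,\pi)\), Parseval gives
\[
        J(h)=\frac{\pi}{4}\sum_{j=1}^2\sum_{k\ge1}(k^2-1)\,c_{j,k}^2\ge0 .
\]
This proves \(J(\gamma)\ge0\). Equality holds if and only if \(c_{j,k}=0\) for every \(k\ge2\), that is, \(h(t)=b\sin t\) for some \(b\in\R^2\). Equivalently, \(\gamma(t)=a\cos t+b\sin t\). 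Conversely, every curve of this form gives \(J=J(b\sin t)=\frac{|b|^2}{2}\int_0^\pi\cos 2t\,dt=0\), so equality holds exactly for this family.

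The only delicate point is justifying the integration by parts and the sine expansion for general \(H^1\) curves, not just smooth ones. I will handle it by noting that \(\gamma_0\) is smooth, so the cross-term identity holds for every \(h\in H^1_0\) by density (or simply by absolute continuity of \(h\)). The Fourier characterisation is the standard spectral decomposition of \(-d^2/dt^2\) on \(H^1_0(0,\pi)\), whose eigenvalues are \(k^2\) with first eigenvalue exactly \(1\). That criticality of the first eigenvalue is what makes \(T=\pi\) the critical time and produces the one-parameter degeneracy in \(b\).
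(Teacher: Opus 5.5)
Your proof is correct and matches the paper's argument: the same splitting \(\gamma=a\cos t+h\) with \(h\in H^1_0\), the cross term killed by integration by parts using \(\ddot\gamma_0=-\gamma_0\), \(J(a\cos t)=0\), and then the sharp Wirtinger inequality on \([0,\pi]\) with its equality case \(h=b\sin t\). The only difference is that you derive Wirtinger and its equality case explicitly from the sine expansion and Parseval, whereas the paper simply cites them.
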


\begin{proof}
Let us set
\[
        \omega(t):=a\cos t,
        \qquad
        \zeta(t):=\gamma(t)-\omega(t).
\]
We obtain \(\zeta\in H^1_0([0,\pi],\R^2)\). Since \(\ddot\omega=-\omega\), integration by parts gives
\[
        \int_0^\pi
        \left(
        \dot\omega\cdot\dot\zeta-\omega\cdot\zeta
        \right)\,dt
        =
        0,
\]
and also \(J(\omega)=0\). Therefore,
\[
        J(\gamma)
        =
        \frac12\int_0^\pi
        \left(
        |\dot\zeta(t)|^2-|\zeta(t)|^2
        \right)\,dt.
\]
By sharp Wirtinger inequality on \([0,\pi]\), we obtain
\[
        \int_0^\pi|\zeta(t)|^2\,dt
        \le
        \int_0^\pi|\dot\zeta(t)|^2\,dt,
\]
and also \(J(\gamma)\ge0\). Equality holds exactly when each scalar component
is an equality case in Wirtinger's inequality. It follows that each component is a multiple of \(\sin t\), or, in other words, that \(\zeta(t)=b\sin t\) for some \(b\in\R^2\).
\end{proof}

We write
\[
        (\mathcal R_\theta\gamma)(t):=R_\theta\gamma(t)
\]
for the rotation action induced on path space. We now discuss the main construction. The theorem consists of two parts. Recall that the first provides a first-integral tilting criterion that preserves incompressibility and minimality. The second describes an exact symmetry criterion within the same tilted family. All shell measures we consider below are finite positive Borel measures, and \(\sigma\) denotes the normalised surface measure on \(S^3\).

\begin{theorem}[First-integral tilting criterion]
\label{thm:first-integral-tilts}
Let \(\tau:=\Pi_\#\sigma\), and let \(g:\mathbb S^2_{1/2}\to\R\) be bounded and Borel. We further assume that the \(L^\infty(\tau)\)-class of \(g\) is \(\kappa\)-odd, namely
\[
        g\circ\kappa=-g
        \qquad \tau\text{-a.e.},
        \qquad
        \kappa(n,m,\ell):=(n,m,-\ell).
\]
Let \(\delta\in\R\) be such that \(1+\delta g\circ\Pi\ge0\) \(\sigma\)-a.e. This condition is satisfied, for example, whenever \(g\not\equiv0\) in \(L^\infty(\tau)\) and
\[
        |\delta|\le \|g\|_{L^\infty(\tau)}^{-1}.
\]
If \(g=0\) in \(L^\infty(\tau)\), the condition is void. We define
\[
        d\mu_{\delta,g}:=\pi(1+\delta\,g\circ\Pi)\,d\sigma,
        \qquad
        \eta_{\delta,g}:=\Phi_\#\mu_{\delta,g}.
\]
We interpret \(\eta_{\delta,g}\) as a measure on \(C([0,\pi],\overline D)\). It follows that
\(\mu_{\delta,g}\) is a positive finite stationary Borel measure on \(S^3\subset\R^2_x\times\R^2_v\). Since
\(S^3\cap(\partial D\times\R^2)\) is \(\mu_{\delta,g}\)-null, we may also
view \(\mu_{\delta,g}\) as a phase measure on \(D\times\R^2\). Moreover,
\[
        \mu_{\delta,g}(S^3)=\pi,
        \qquad
        (\pi_x)_\#\mu_{\delta,g}=\Leb^2\!\lfloor D.
\]
It follows that the path measure \(\eta_{\delta,g}\) is an action-minimising
generalised incompressible flow from \(i_D\) to \(-i_D\) on \([0,\pi]\).

If \(\delta\ne0\), then \(\mu_{\delta,g}\) is rotationally invariant if and
only if
\[
        g=g\circ\rho_\theta
        \qquad
        \text{in }L^\infty(\mathbb S^2_{1/2},\Pi_\#\sigma)
        \quad\forall\theta\in\R.
\]
The same condition describes the rotational invariance of \(\eta_{\delta,g}\) under the path-space action \(\mathcal R_\theta\).
\end{theorem}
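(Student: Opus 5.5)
The proof splits into four parts: basic properties of \(\mu_{\delta,g}\), the spatial marginal at every time, minimality via the pathwise calibration, and the rotation criterion.

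\textbf{Basic properties.} Positivity is the standing hypothesis \(1+\delta g\circ\Pi\ge0\) \(\sigma\)-a.e. The sufficient condition \(|\delta|\le\|g\|_{L^\infty(\tau)}^{-1}\) works because \(|g\circ\Pi|\le\|g\|_{L^\infty(\tau)}\) \(\sigma\)-a.e. by definition of \(\tau=\Pi_\#\sigma\). Stationarity follows from two facts: \(\sigma\) is invariant under \(\phi_t\), since \(\phi_t\) is the unitary map \(z\mapsto e^{-it}z\) on \(\C^2\) and hence an isometry of \(S^3\); and \(\Pi\circ\phi_t=\Pi\) by Lemma~\ref{lem:hopf-quotient}. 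So the density is preserved.

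\textbf{Mass and spatial marginal.} The key observation is that the reflection \(\iota(x,v)=(x,-v)\) preserves \(\sigma\) (it is an isometry) and satisfies \(\Pi\circ\iota=\kappa\circ\Pi\), because \(N,M\) are even in \(v\) and \(L\) is odd. Hence for any bounded Borel \(f(x)\),
\[
\int f(x)\,g(\Pi)\,d\sigma=\int f(x)\,g(\kappa\Pi)\,d\sigma=-\int f(x)\,g(\Pi)\,d\sigma .
\]
The middle step uses that \(g\circ\kappa=-g\) \(\tau\)-a.e.; this transfers to \(\sigma\)-a.e.\ after composing with \(\Pi\), since \(\Pi_\#\sigma=\tau\). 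So the tilt integrates to zero against every function of \(x\). Therefore \((\pi_x)_\#\mu_{\delta,g}=(\pi_x)_\#(\pi\sigma)=\Leb^2\!\lfloor D\) by Lemma~\ref{lem:surface-disintegration}, and taking \(f=1\) gives total mass \(\pi\). Stationarity then upgrades this to \((\pi_x\circ\phi_t)_\#\mu_{\delta,g}=\Leb^2\!\lfloor D\) for all \(t\). The boundary set is null because \(\mu_{\delta,g}\ll\sigma\).

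\textbf{Minimality.} Set \(\eta=\Phi_\#\mu_{\delta,g}\). Since \(e_t\circ\Phi=\pi_x\circ\phi_t\), we get \((e_t)_\#\eta=\Leb^2\!\lfloor D\). Since \(\Phi(x,v)(\pi)=-x\), the endpoint constraint holds. For any admissible \(\eta'\) with finite action, the pathwise calibration lemma with \(a=\omega(0)\) gives
\[
\mathcal A(\eta')\ge\int\!\!\int_0^\pi\tfrac12|\omega(t)|^2\,dt\,d\eta'=\int_0^\pi\!\!\int_D\tfrac12|y|^2\,dy\,dt,
\]
where the equality uses Fubini and incompressibility. This lower bound is a constant independent of \(\eta'\). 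Every curve \(\Phi(x,v)\) attains equality in the calibration lemma, so \(\eta\) attains the bound and is therefore a minimiser.

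\textbf{Rotational invariance.} The map \(\overline R_\theta\) preserves \(\sigma\), and by Lemma~\ref{lem:rotation-quotient},
\[
(\overline R_\theta)_\#\mu_{\delta,g}=\pi\bigl(1+\delta\, g\circ\rho_{-\theta}\circ\Pi\bigr)\,d\sigma .
\]
Thus invariance holds iff \(g\circ\rho_{-\theta}\circ\Pi=g\circ\Pi\) \(\sigma\)-a.e., which (using \(\delta\neq0\)) is equivalent to \(g\circ\rho_{-\theta}=g\) \(\tau\)-a.e. Ranging over all \(\theta\) gives the stated criterion. For the path-space statement, \(\Phi\circ\overline R_\theta=\mathcal R_\theta\circ\Phi\). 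Moreover \(\Phi\) is injective, since \(x=\omega(0)\) and \(v=\dot\omega(0)\). So \((\mathcal R_\theta)_\#\eta=\eta\) iff \((\overline R_\theta)_\#\mu=\mu\), because \(\mu\) is recovered from \(\eta\) as \((e_0,\dot e_0)_\#\eta\).

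\textbf{Main obstacle.} The step I expect to be hardest is the careful handling of a.e.\ statements and of finite-action admissibility. This means checking that \(\kappa\)-oddness in \(L^\infty(\tau)\) correctly pulls back to \(\sigma\)-a.e.\ identities, and that the calibration inequality can be integrated against arbitrary competitors. The latter requires \(\int\tfrac12|\omega|^2\) to be finite, which holds because paths lie in \(\overline D\).
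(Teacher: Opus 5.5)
Your proposal is correct and follows the paper's proof essentially step by step. Stationarity comes from $\Pi\circ\phi_t=\Pi$ together with orthogonality of $\phi_t$; the spatial marginal from the oddness of $L$ under $v\mapsto -v$; minimality from the calibration lower bound via Fubini and incompressibility; and the symmetry criterion from the $\rho_\theta$-equivariance of $\Pi$ plus injectivity of $\Phi$.

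The only differences are minor streamlinings. You use the global reflection $(x,v)\mapsto(x,-v)$ with $\Pi_\#\sigma=\tau$ instead of passing to a pointwise $\kappa$-odd representative and disintegrating over $x$. You get the action of $\eta_{\delta,g}$ from the equality case of the calibration rather than computing $\pi^2/4$ explicitly. You recover $\mu_{\delta,g}$ via $(e_0,\dot e_0)$, whereas the paper uses $(e_0,e_{\pi/2})$, which is defined and continuous on all of path space.
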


\begin{proof}
Let \(\kappa(n,m,\ell):=(n,m,-\ell)\). By Lemma \ref{lem:hopf-pushforward}, \(\tau\) is normalised surface measure on \(\mathbb S^2_{1/2}\), and \(\kappa_\#\tau=\tau\). We first remove a representative issue for \(g\). Define
\[
        \widetilde g(q):=\frac12\bigl(g(q)-g(\kappa q)\bigr).
\]
This implies that \(\widetilde g\) is bounded, Borel, and pointwise \(\kappa\)-odd.
Moreover, \(\widetilde g=g\) \(\tau\)-a.e. Indeed, let \(E\) be a \(\tau\)-null set outside which \(g\circ\kappa=-g\) holds. Since \(\kappa_\#\tau=\tau\), the set \(E\cup\kappa^{-1}(E)\) is still \(\tau\)-null. Outside this union, we have \(g(q)=-g(\kappa q)\), and thus
\(\widetilde g(q)=g(q)\). Given that \(\Pi_\#\sigma=\tau\), it follows that
\[
        (\widetilde g-g)\circ\Pi=0\qquad \sigma\text{-a.e.}
\]
We infer that \(g\) and \(\widetilde g\) define the same measure, and the
non-negativity hypothesis remains unchanged. Therefore, we may assume from now on that \(g\) is pointwise \(\kappa\)-odd. Let us set
\[
        G:=g\circ\Pi .
\]
We first compute the spatial marginal. For \(\Leb^2\)-a.e. \(x\in D\), namely, on
\(0<|x|<1\), let
\[
        C_x:=\{v\in\R^2:\ |v|=\sqrt{1-|x|^2}\}.
\]
By Lemma \ref{lem:surface-disintegration}, we have
\[
        d\sigma(x,v)
        =
        \frac{1}{2\pi^2\sqrt{1-|x|^2}}\,
        (\Haus^1\!\lfloor C_x)(dv)\,d\Leb^2(x)
\]
on \(\{0<|x|<1\}\). The involution \(v\mapsto -v\) preserves this conditional
measure and satisfies
\[
        N(x,-v)=N(x,v),\qquad M(x,-v)=M(x,v),\qquad L(x,-v)=-L(x,v).
\]
Therefore, \(G(x,-v)=-G(x,v)\) on \(S^3\). As a result, the inner fibre integral
vanishes: for every Borel set \(A\subset D\),
\[
\begin{aligned}
        \int_{\pi_x^{-1}(A)}G\,d\sigma
        &=
        \int_A
        \frac{1}{2\pi^2\sqrt{1-|x|^2}}
        \int_{C_x}G(x,v)\,d\Haus^1(v)
        \,d\Leb^2(x)                              \\
        &=0.
\end{aligned}
\]
It follows that \((\pi_x)_\#(G\sigma)=0\) as a signed Borel measure on \(D\). Since
\((\pi_x)_\#\sigma=\pi^{-1}\Leb^2\!\lfloor D\), we obtain
\[
        (\pi_x)_\#\mu_{\delta,g}=\Leb^2\!\lfloor D.
\]
We now take \(A=D\) and use the fact that the exceptional fibres over \(x=0\) and
\(|x|=1\) are \(\sigma\)-negligible, which yields \(\mu_{\delta,g}(S^3)=\pi\). Positivity is exactly our assumed non-negativity of \(1+\delta g\circ\Pi\).

We next verify stationarity and admissibility. By Lemma \ref{lem:hopf-quotient}, we have \(G\circ\phi_t=G\). Moreover, \(\phi_t\) is the restriction to \(S^3\)
of an orthogonal linear map of \(\R^4\). Therefore, \((\phi_t)_\#\sigma=\sigma\),
and
\[
        (\phi_t)_\#\mu_{\delta,g}=\mu_{\delta,g}.
\]
As a result,
\[
        (e_t)_\#\eta_{\delta,g}
        =
        (\pi_x\circ\phi_t)_\#\mu_{\delta,g}
        =
        (\pi_x)_\#\mu_{\delta,g}
        =
        \Leb^2\!\lfloor D,
\]
which shows that \(\eta_{\delta,g}\) is incompressible. Since Cauchy--Schwarz gives \(\Phi(S^3)\subset C([0,\pi],\overline D)\), the push-forward is a Borel measure on the admissible path space. Observe that the endpoints are fixed by the same spatial marginal. Indeed,
\[
        \Phi(x,v)(0)=x,
        \qquad
        \Phi(x,v)(\pi)=-x.
\]
From \((\pi_x)_\#\mu_{\delta,g}=\Leb^2\!\lfloor D\), we deduce
\[
        (e_0,e_\pi)_\#\eta_{\delta,g}
        =
        (x,-x)_\#(\Leb^2\!\lfloor D).
\]
This implies that \(\eta_{\delta,g}\) joins \(i_D\) to \(-i_D\). We now compute the action. For \(\mu_{\delta,g}\)-a.e. \((x,v)\in S^3\), the path \(t\mapsto \Phi(x,v)(t)=x\cos t+v\sin t\) is absolutely continuous and satisfies
\[
        \dot\Phi(x,v)(t)=-x\sin t+v\cos t .
\]
Since
\[
        \int_0^\pi \sin t\cos t\,dt=0,
        \qquad
        \int_0^\pi \sin^2t\,dt
        =
        \int_0^\pi \cos^2t\,dt
        =
        \frac{\pi}{2},
\]
we have
\[
        \mathcal E(\Phi(x,v))
        =
        \int_0^\pi \frac12|\dot\Phi(x,v)(t)|^2\,dt
        =
        \frac{\pi}{4}(|x|^2+|v|^2)
        =
        \frac{\pi}{4}.
\]
Therefore,
\[
        \mathcal A(\eta_{\delta,g})
        =
        \int_{S^3}\mathcal E(\Phi(x,v))\,d\mu_{\delta,g}(x,v)
        =
        \frac{\pi^2}{4}.
\]

We now turn to optimality. It suffices to consider competitors with finite kinetic
action. Let \(\nu\) be such a competitor. This implies that \(\nu\) is concentrated
on \(H^1([0,\pi],\R^2)\). Since
\[
        (e_0,e_\pi)_\#\nu=(x,-x)_\#(\Leb^2\!\lfloor D)
\]
is supported on the graph \(\{(a,-a):a\in D\}\), it follows that
\[
        \gamma(\pi)=-\gamma(0)
        \qquad\text{for }\nu\text{-a.e. }\gamma .
\]
By the pathwise calibration lemma, we have
\[
        \int_\Omega J(\gamma)\,d\nu(\gamma)\ge0.
\]
Since \(|\gamma(t)|\le1\) on the path space, the potential term is integrable. By Fubini's theorem and incompressibility, we obtain
\[
\begin{aligned}
        \mathcal A(\nu)
        &=
        \int_\Omega J(\gamma)\,d\nu(\gamma)
        +
        \int_\Omega\int_0^\pi
        \frac12|\gamma(t)|^2\,dt\,d\nu(\gamma)              \\
        &=
        \int_\Omega J(\gamma)\,d\nu(\gamma)
        +
        \int_0^\pi\int_D\frac12|y|^2\,dy\,dt                \\
        &\ge
        \int_0^\pi\int_D\frac12|y|^2\,dy\,dt.
\end{aligned}
\]
From
\[
        \int_D |y|^2\,dy=2\pi\int_0^1 r^3\,dr=\frac{\pi}{2},
\]
we conclude that the lower bound is \(\pi^2/4\), and the action of \(\eta_{\delta,g}\) attains this bound. Therefore, \(\eta_{\delta,g}\) is a minimiser.

Finally, we establish the symmetry criterion. By Lemma \ref{lem:rotation-quotient}
and the invariance of \(\sigma\) under \(\overline R_\theta\),
\[
        \Pi\circ\overline R_\theta^{-1}
        =
        \rho_\theta^{-1}\circ\Pi .
\]
This implies
\[
        d\bigl((\overline R_\theta)_\#\mu_{\delta,g}\bigr)
        =
        \pi\bigl(1+\delta\,g\circ\rho_\theta^{-1}\circ\Pi\bigr)\,d\sigma .
\]
Let us fix \(\theta\). Since \(\delta\ne0\), invariance under \(\overline R_\theta\)
is equivalent to
\[
        g\circ\rho_\theta^{-1}\circ\Pi=g\circ\Pi
        \qquad\sigma\text{-a.e. on }S^3.
\]
By the definition of \(\tau=\Pi_\#\sigma\), this is equivalent to
\[
        g\circ\rho_\theta^{-1}=g
        \qquad
        \text{in }L^\infty(\mathbb S^2_{1/2},\tau).
\]
We now replace \(\theta\) by \(-\theta\) to obtain the criterion in question.

It remains to transfer this criterion from phase space to path space. Notice that the
path map is injective due to
\[
        x=\Phi(x,v)(0),
        \qquad
        v=\Phi(x,v)(\pi/2).
\]
Since \(S^3\) is compact and \(C([0,\pi],\overline D)\) is Hausdorff, \(\Phi\)
is a homeomorphism from \(S^3\) onto the compact set \(\Phi(S^3)\). Moreover,
\[
        \mathcal R_\theta\circ\Phi=\Phi\circ\overline R_\theta,
\]
and \(\mathcal R_\theta\Phi(S^3)=\Phi(S^3)\). Therefore, the inverse
\(\Phi^{-1}:\Phi(S^3)\to S^3\) is Borel and
\[
        (\Phi^{-1})_\#\eta_{\delta,g}=\mu_{\delta,g}.
\]
It follows that
\[
        (\mathcal R_\theta)_\#\eta_{\delta,g}=\eta_{\delta,g}
        \quad\Longleftrightarrow\quad
        (\overline R_\theta)_\#\mu_{\delta,g}=\mu_{\delta,g}.
\]
This proves the path-space statement and completes the proof.
\end{proof}

\begin{remark}[Open-disc convention]
If the admissible path space were \(C([0,\pi],D)\) instead of \(C([0,\pi],\overline D)\), we could use the same measure. Indeed, the boundary-hitting oscillators create a \(\mu_{\delta,g}\)-null set. This implies that \(\eta_{\delta,g}\), initially viewed as a measure on \(C([0,\pi],\overline D)\), is concentrated on \(C([0,\pi],D)\). In simpler terms,
we may modify \(\Phi\) on this null set without changing the endpoints,
incompressibility, or action.
\end{remark}

\section{Applications}

We derive two immediate applications of the tilting criterion. We first show that arbitrary bounded functions on the projected Hopf disc give an infinite-dimensional family of
tilts. Finally, we note that one polynomial density already answers the disc question.

\begin{corollary}[An infinite-dimensional family]
\label{cor:infinite-family}
Let \(\psi\) be bounded and Borel on
\[
        \overline B_{1/2}(0):=\{(n,m)\in\R^2:\ n^2+m^2\le1/4\}.
\]
Let
\[
        g_\psi(n,m,\ell):=\ell\,\psi(n,m)
        \qquad\text{on }\mathbb S^2_{1/2}.
\]
It follows that \(g_\psi\) is odd in the \(\ell\)-variable. Hence it satisfies the
oddness hypothesis in Theorem \ref{thm:first-integral-tilts}, and
\[
        d\mu_{\delta,\psi}
        =
        \pi\bigl(1+\delta\,L\,\psi(N,M)\bigr)\,d\sigma
\]
induces a stationary action-minimising generalised incompressible flow from
\(i_D\) to \(-i_D\) whenever
\[
        1+\delta L\psi(N,M)\ge0
        \qquad \sigma\text{-a.e.}
\]
In particular, this condition holds if
\[
        |\delta|\le
        \frac{2}{\|\psi\|_{L^\infty(\overline B_{1/2},\Leb^2)}},
\]
where we interpret the right-hand side as \(+\infty\) if the denominator
vanishes. To describe the symmetry criterion on the projected disc, let us define
\[
        Q_\varphi(n,m)
        :=
        (n\cos\varphi-m\sin\varphi,\,
        n\sin\varphi+m\cos\varphi).
\]
If \(\delta\ne0\), then the corresponding flow is rotationally invariant
exactly when
\[
        \psi=\psi\circ Q_\varphi
        \qquad\text{in }
        L^\infty(\overline B_{1/2}(0),\Leb^2)
        \quad\text{for every }\varphi\in\R.
\]
In other words, \(\psi\) has a radial representative up to \(\Leb^2\)-null sets. In particular, if \(\psi\) is not a.e. radial, then every admissible \(\delta\ne0\) yields a non-rotationally invariant minimiser.
\end{corollary}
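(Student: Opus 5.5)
The corollary follows by applying Theorem \ref{thm:first-integral-tilts} to \(g=g_\psi\), checking its hypotheses, and then translating the symmetry criterion from \(L^\infty(\tau)\) on the sphere to \(L^\infty(\Leb^2)\) on the disc. I will use the projection \(P\) and the null-set equivalence of Lemma \ref{lem:hopf-pushforward} throughout.

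\emph{Step 1: hypotheses of the theorem.} The function \(g_\psi(n,m,\ell)=\ell\psi(n,m)\) is bounded and Borel, since \(|\ell|\le 1/2\) and \(\psi\) is bounded Borel. It is pointwise \(\kappa\)-odd, hence odd in \(L^\infty(\tau)\).

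For non-negativity I use \(|L|\le1/2\) on \(S^3\). I also need \(|\psi(N,M)|\le\|\psi\|_{L^\infty(\Leb^2)}\) \(\sigma\)-a.e. on \(S^3\). The exceptional set \(\{|\psi|>\|\psi\|_\infty\}\) is \(\Leb^2\)-null in the disc. Its preimage under \(P\circ\Pi\) is therefore \(\sigma\)-null, because \((P\circ\Pi)_\#\sigma=P_\#\tau\) and \(P_\#\tau\) has the same null sets as \(\Leb^2\lfloor\overline B_{1/2}\) (Lemma \ref{lem:hopf-pushforward}). Hence \(|\delta L\psi(N,M)|\le |\delta|\|\psi\|_\infty/2\le1\) \(\sigma\)-a.e., and \(1+\delta L\psi(N,M)\ge0\). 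The rest of the first statement is then exactly Theorem \ref{thm:first-integral-tilts}.

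\emph{Step 2: symmetry.} For \(\delta\ne0\), the theorem says rotational invariance holds iff \(g_\psi=g_\psi\circ\rho_\theta\) \(\tau\)-a.e. for all \(\theta\). Since \(\rho_\theta\) fixes \(\ell\) and acts on \((n,m)\) by \(Q_{2\theta}\), this condition reads
\[
\ell\bigl(\psi(n,m)-\psi(Q_{2\theta}(n,m))\bigr)=0\qquad \tau\text{-a.e.}
\]
The set \(\{\ell=0\}\) is a great circle, so it is \(\tau\)-null, and the condition is equivalent to \(\psi=\psi\circ Q_{2\theta}\) \((P_\#\tau)\)-a.e. on \(\overline B_{1/2}\). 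By Lemma \ref{lem:hopf-pushforward}, this is the same as \(\Leb^2\)-a.e. As \(\theta\) ranges over \(\R\), \(\varphi=2\theta\) ranges over \(\R\), which gives the stated criterion.

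\emph{Step 3: radial representative.} This is the main obstacle: passing from "for each \(\varphi\), a.e." to a single a.e.-radial representative, when the null set depends on \(\varphi\). I would use Fubini on \(\overline B_{1/2}\times[0,2\pi)\). The function \((y,\varphi)\mapsto\psi(Q_\varphi y)-\psi(y)\) is Borel and vanishes for a.e. \(y\), for every \(\varphi\). Hence for a.e. \(y\) it vanishes for a.e. \(\varphi\). Define
\[
\bar\psi(y)=\frac1{2\pi}\int_0^{2\pi}\psi(Q_\varphi y)\,d\varphi .
\]
Then \(\bar\psi\) is radial and Borel, and \(\bar\psi=\psi\) a.e. Conversely, a function equal a.e. to a radial one satisfies \(\psi\circ Q_\varphi=\psi\) a.e. for each \(\varphi\), because \(Q_\varphi\) preserves \(\Leb^2\).

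The final sentence of the corollary then follows: if \(\psi\) is not a.e. radial, every admissible \(\delta\ne0\) gives a non-rotationally invariant minimiser.
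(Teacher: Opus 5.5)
Your proposal is correct. Steps 1 and 2 match the paper's argument: both use the fact that $P_\#\tau$ and $\Leb^2$ have the same null sets to get the positivity bound, and both use $\tau(\{\ell=0\})=0$ together with $\rho_\theta$ acting on $(n,m)$ by $Q_{2\theta}$ to transfer the symmetry criterion.

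The genuine difference is in Step 3, where the exceptional null set depends on $\varphi$. The paper handles this in four moves:
\begin{itemize}
\item it restricts to the countable dense set $\Gamma$ of rational rotation angles and discards the countable union of null sets;
\item it uses Fubini in polar coordinates to find, for almost every radius $r$, an angular section $\psi_r\in L^2(\R/2\pi\Z)$ invariant under $\Gamma$;
\item it uses strong continuity of translations in $L^2$ to extend this to invariance under all rotations;
\item it shows via Fourier coefficients that $\psi_r$ is a.e.\ constant, and takes the angular mean $h(r)$ as the radial profile.
\end{itemize}
You instead use the joint Borel measurability of $(y,\varphi)\mapsto\psi(Q_\varphi y)$, which holds because $\psi$ is Borel and $(y,\varphi)\mapsto Q_\varphi y$ is continuous. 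You apply Tonelli to the Borel set $\{(y,\varphi):\psi(Q_\varphi y)\ne\psi(y)\}$. Every $\varphi$-section of this set is null, so almost every $y$-section is null, and the rotational average $\bar\psi$ is then a Borel radial function equal to $\psi$ a.e.

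Your route is shorter and more self-contained. It needs neither the dense-subgroup and continuity step nor Fourier analysis. It also produces an explicit Borel representative in one stroke, whereas the paper has to define $h$ arbitrarily on the exceptional radii and implicitly check measurability. The paper's argument, in turn, only uses the invariance identities along a countable dense set of angles plus $L^2$-continuity. Your argument uses them for almost every angle together with the product structure.
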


\begin{proof}
Oddness in \(\ell\) follows immediately. Let us first verify the positivity range.
By Lemma \ref{lem:hopf-pushforward}, we know that
\(P_\#\tau\), where \(P(n,m,\ell)=(n,m)\), which is equivalent to \(\Leb^2\!\lfloor \overline B_{1/2}(0)\). Therefore,
\[
        |\psi(N,M)|
        \le
        \|\psi\|_{L^\infty(\overline B_{1/2},\,\Leb^2)}
        \qquad \sigma\text{-a.e.}
\]
Since \(|L|\le 1/2\) on \(\mathbb S^2_{1/2}\), we get
\[
        |L\,\psi(N,M)|
        \le
        \frac12\|\psi\|_{L^\infty(\overline B_{1/2},\,\Leb^2)}
        \qquad \sigma\text{-a.e.}
\]
This gives the required sufficient condition for non-negativity.

We will now translate the symmetry criterion to the disc. Again, by
Lemma \ref{lem:hopf-pushforward}, \(\tau:=\Pi_\#\sigma\) is a normalised surface
measure on \(\mathbb S^2_{1/2}\). Moreover, \(P_\#\tau\) is equivalent to
two-dimensional Lebesgue measure \(dA:=d\Leb^2\) on \(\overline B_{1/2}(0)\), and \(\tau(\{\ell=0\})=0\). For \(g_\psi(n,m,\ell)=\ell\psi(n,m)\), the criterion in
Theorem \ref{thm:first-integral-tilts} becomes
\[
        \psi=\psi\circ Q_\varphi
        \qquad\text{in }
        L^\infty(\overline B_{1/2}(0),dA)
        \quad\text{for every }\varphi\in\R.
\]
Indeed, physical rotations act on \((N,M)\) by \(Q_{2\theta}\). The factor
\(\ell\) is nonzero for \(\tau\)-a.e. point, and \(2\theta\) ranges over all
angles.

We must still identify the functions that satisfy these identities. Since the
disc has finite measure, the \(L^\infty\)-identities imply the same identities
in \(L^2\). Let
\[
        \Gamma:=2\pi\Q/2\pi\Z .
\]
For each \(\varphi\in\Gamma\), we choose a null set outside which
\(\psi=\psi\circ Q_\varphi\) holds, and remove the countable union of these
sets. By Fubini's theorem in polar coordinates, we obtain a set \(E\subset(0,1/2)\) of full one-dimensional measure such that, for each \(r\in E\), the section
\[
        \psi_r(\vartheta)
        :=
        \psi(r\cos\vartheta,r\sin\vartheta)
\]
belongs to \(L^2(\R/2\pi\Z)\) and is invariant, in \(L^2\), under translation
by every element of \(\Gamma\). Recall that translations are strongly continuous on
\(L^2(\R/2\pi\Z)\), so the section \(\psi_r\) is invariant under all rotations. It follows that all non-zero Fourier coefficients of \(\psi_r\) vanish, and
\(\psi_r\) is a.e. constant. Let us define
\[
        h(r):=\frac{1}{2\pi}\int_0^{2\pi}
        \psi(r\cos\vartheta,r\sin\vartheta)\,d\vartheta
\]
whenever the section belongs to \(L^2\), and define \(h\) arbitrarily on the
remaining radii. This implies that \(h(|\cdot|)\) is a measurable radial representative of
\(\psi\). The converse follows immediately, since every radial representative satisfies the required rotation identities.
\end{proof}

\begin{corollary}[A fixed-dimensional answer to the disc question]
\label{cor:polynomial-example}
For every \(0<|\eps|<8\), the phase measure
\[
        d\mu_\eps
        =
        \pi\bigl(1+\eps LM\bigr)\,d\sigma
\]
induces a minimising generalised incompressible flow from \(i_D\) to \(-i_D\)
in the unit disc. Furthermore, this minimiser is not rotationally invariant. In coordinates,
\[
        LM=(x_1v_2-x_2v_1)(x_1x_2+v_1v_2).
\]
\end{corollary}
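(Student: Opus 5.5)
The plan is to apply Theorem \ref{thm:first-integral-tilts} with \(g(n,m,\ell)=\ell m\) and \(\delta=\eps\); equivalently, use Corollary \ref{cor:infinite-family} with \(\psi(n,m)=m\). There are three things to check: the oddness hypothesis, non-negativity (in fact strict positivity) for \(|\eps|<8\), and failure of the axisymmetry criterion.

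\textbf{Oddness.} Clearly \(g\) is bounded, Borel, and satisfies \(g\circ\kappa=-g\) pointwise.

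\textbf{Positivity.} I will not use the crude bound \(|\delta|\le 2/\|\psi\|_\infty=4\) from Corollary \ref{cor:infinite-family}. Instead, I bound \(|LM|\) sharply on \(\mathbb S^2_{1/2}\). Since \(\ell^2+m^2\le 1/4\), the AM--GM inequality gives
\[
|\ell m|\le\tfrac12(\ell^2+m^2)\le\tfrac18 .
\]
Equality forces \(n=0\) and \(|\ell|=|m|=1/(2\sqrt2)\). Hence \(1+\eps LM\ge 1-|\eps|/8>0\) for \(|\eps|<8\), uniformly on \(S^3\). This is why the condition is strict positivity rather than mere non-negativity. The theorem then yields that \(\mu_\eps\) has mass \(\pi\), Lebesgue spatial marginal, is stationary, and that \(\Phi_\#\mu_\eps\) is a minimiser. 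The coordinate formula for \(LM\) is just the definition of \(L\) and \(M\).

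\textbf{Non-invariance.} This is the only step needing care. Since \(\eps\ne0\), Corollary \ref{cor:infinite-family} reduces rotational invariance to \(\psi(n,m)=m\) having a radial representative on \(\overline B_{1/2}(0)\) up to \(\Leb^2\)-null sets. The function \(m\) is continuous, so it suffices to exhibit one angle for which \(m\ne m\circ Q_\varphi\) on a set of positive measure. Take \(\varphi=\pi\): then \(m\circ Q_\pi=-m\), and this differs from \(m\) wherever \(m\ne0\), a set of full measure.

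Alternatively, one can argue directly from Theorem \ref{thm:first-integral-tilts}. With \(\theta=\pi/2\) we have \(\rho_{\pi/2}(n,m,\ell)=(-n,-m,\ell)\), so \(g\circ\rho_{\pi/2}=-\ell m\). Thus \(g\circ\rho_{\pi/2}\ne g\) on \(\{\ell m\ne0\}\). By Lemma \ref{lem:hopf-pushforward}, \(\tau\) is normalised surface measure, so this set has full \(\tau\)-measure. Either route shows the minimiser is not rotationally invariant, both as a phase measure and, by the last part of the theorem, on path space.
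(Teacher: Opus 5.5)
Your proposal is correct and follows essentially the same route as the paper: apply Corollary~\ref{cor:infinite-family} with \(\psi(n,m)=m\), get strict positivity from \(|LM|\le (L^2+M^2)/2\le 1/8\), and rule out axisymmetry via \(\rho_{\pi/2}(n,m,\ell)=(-n,-m,\ell)\), which turns \(g=\ell m\) into \(-g\). The only cosmetic difference is that you observe directly that \(g\circ\rho_{\pi/2}\ne g\) on the \(\tau\)-full set \(\{\ell m\ne0\}\), whereas the paper phrases the same fact as a contradiction with \(g\ne0\) in \(L^\infty(\tau)\).
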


\begin{proof}
We apply Corollary \ref{cor:infinite-family} to \(\psi(n,m)=m\). The
non-negativity condition follows from a direct estimate. Since
\[
        L^2+M^2+N^2=\frac14,
\]
we have
\[
        |LM|\le\frac{L^2+M^2}{2}\le\frac18.
\]
It follows that \(1+\eps LM>0\) whenever \(|\eps|<8\). It remains to exclude
axisymmetry. Since
\[
        \rho_{\pi/2}(n,m,\ell)=(-n,-m,\ell),
\]
we have \(g\circ\rho_{\pi/2}=-g\) for \(g(n,m,\ell)=\ell m\). The function
\(g\) is continuous and nonzero on the open set \(\{\ell m\ne0\}\subset\mathbb S^2_{1/2}\). Therefore, \(g\ne0\) holds in \(L^\infty(\tau)\). If \(g\) were axisymmetric, then \(g\circ\rho_{\pi/2}=-g\) would force \(g=0\) \(\tau\)-a.e., which would lead to a contradiction. We conclude that the tilted measure is not rotationally invariant whenever
\(\eps\ne0\).
\end{proof}

We have thus shown that the question posed in \cite[p. 144, before Section 4.1]{BFS} and reiterated in \cite[p. 37, end of Section 1.4.4]{DaneriFigalli2013} has an affirmative answer already in the stationary, strictly positive, and single-energy-shell
class.

\subsection{Comparison with fixed orientation}
\label{sec:BFS-comparison}
For completeness, we compare the construction with the orientation-fixed framework of
\cite{BFS}, and fix the sign convention between their orientation variable and
the Hopf variable \(L\). We keep the mass-\(\pi\) convention from the introduction. Under this convention, the incompressible marginal is \(\Leb^2\!\lfloor D\), and the
isotropic single-shell law is \(\mu_0=\pi\sigma\). In the normalised-area convention, we divide all phase-space and path-space measures below by \(\pi\). As in the proof of Theorem \ref{thm:first-integral-tilts}, we replace \(g\), when necessary, by its pointwise \(\kappa\)-odd representative, where \(\kappa(n,m,\ell)=(n,m,-\ell)\). This replacement keeps \(\mu_{\delta,g}\) the same. We use this representative in the
section below.

In the notation introduced just before \cite[Proposition 4.5]{BFS}, the
projection in the disc case \(R=0\) is
\[
        \pi_P(x,v)=\left(|x|^2-\frac12,\,x\cdot v\right).
\]
Note that the map \(\pi_P\) is invariant under physical rotations, but rotates under the
oscillator flow. On the other hand, \(\Pi=(N,M,L)\) is invariant under the oscillator
flow and transforms under physical rotations. It follows that the Hopf quotient is the
natural coordinate system for this construction. Indeed, our densities are first
integrals of the oscillator flow. They are also constant along oscillator orbits,
but need not be constant along physical rotation orbits. In this sense, our
quotient is transverse to the quotient used in \cite[Section 4.2]{BFS}. The map \(\pi_P\) is adapted to rotationally invariant disintegrations, while \(\Pi\) is adapted to stationary disintegrations.

On \(S^3\), this projection is
\[
        A:=|x|^2-\frac12=\frac12(|x|^2-|v|^2),
        \qquad B:=x\cdot v.
\]
The oscillator flow rotates \((A,B)\) with angular speed \(2\). The orientation variable of \cite[Section 4.1]{BFS} uses
\[
        x^\perp=(x_2,-x_1).
\]
With our rotation convention, \(x^\perp=R_{-\pi/2}x\), so
\[
        x^\perp\cdot v=x_2v_1-x_1v_2=-L .
\]
In this convention, the clockwise/anticlockwise decomposition of
\cite[Section 4.1]{BFS} is precisely the decomposition by the sign of \(L\), where the labels are interchanged. With the convention \(\operatorname{sgn}0=0\), the choices
\(g=\operatorname{sgn}\ell\), \(\delta=1\), and \(g=-\operatorname{sgn}\ell\), \(\delta=1\), give
\[
        2\,\mu_0\!\lfloor_{\{L>0\}}
        =
        \pi(1+\operatorname{sgn}L)\sigma,
        \qquad
        2\,\mu_0\!\lfloor_{\{L<0\}}
        =
        \pi(1-\operatorname{sgn}L)\sigma .
\]
After division by \(\pi\), these are the corresponding probability laws. It
follows that the orientation classes in \cite{BFS} and \cite{DaneriFigalli2013} satisfy
\[
        TD^+=\{L<0\},
        \qquad
        TD^-=\{L>0\}.
\]
Therefore, up to this sign convention, \(2\sigma\!\lfloor_{\{L<0\}}\) and
\(2\sigma\!\lfloor_{\{L>0\}}\) are, respectively, the normalised oriented laws
\(\mu^+\) and \(\mu^-\) in \cite{DaneriFigalli2013}.

We now analyse the corresponding \(\pi_P\)-push-forward in our normalisation, for comparison with \cite[Proposition 4.5]{BFS}. The computation is direct and does not rely on the constants from \cite{BFS}. Let us define
\[
        Q:S^3\to\mathbb S^2_{1/2},
        \qquad
        Q(x,v):=(A(x,v),B(x,v),L(x,v)).
\]
Let us set \(w_1=x_1+ix_2\) and \(w_2=v_1+iv_2\). It follows that
\[
        A=\frac12(|w_1|^2-|w_2|^2),
        \qquad
        w_1\overline{w_2}=B-iL .
\]
We infer that \(Q\) is the Hopf quotient for the physical rotation action.
Indeed, equality of the \(Q\)-coordinates is equivalent, in these complex
variables, to equality of the rank-one Hermitian matrices \(ww^*\). This implies that two
points in the same \(Q\)-fibre differ by a common phase, namely, by a physical
rotation. By the proof of Lemma \ref{lem:hopf-pushforward}, \(Q_\#\sigma\) is
a normalised surface measure on \(\mathbb S^2_{1/2}\). The physical rotation
action also preserves \(\sigma\) and is transitive on the fibres of \(Q\). Therefore, by the uniqueness of Haar measure on compact orbits, the Rokhlin
disintegration of \(\sigma\) with respect to \(Q\) is the normalised Haar
measure on the corresponding physical-rotation orbit, for \(Q_\#\sigma\)-a.e. base point.

We now disintegrate this normalised surface measure over
\(P_Q(A,B,L)=(A,B)\). Observe that the two sheets
\[
        L=\pm\sqrt{1/4-A^2-B^2}
\]
have equal conditional weights for \(A^2+B^2<1/4\). On either sheet, Haar
measure on the physical-rotation orbit is the conditional measure of
\(\sigma\). As the physical rotation angle varies, Lemma \ref{lem:rotation-quotient}
shows that \((N,M)\) moves uniformly on the circle \(N^2+M^2=A^2+B^2\), while \(L\) remains fixed. More explicitly, for \(A^2+B^2<1/4\), we set
\[
        r(A,B):=\sqrt{A^2+B^2},
        \qquad
        \ell(A,B):=\sqrt{1/4-A^2-B^2}.
\]
The conditional average of \(g\circ\Pi\) over the sheet \(L=\ell(A,B)\) in the
\(Q\)-fibre is
\[
        \frac1{2\pi}\int_0^{2\pi}
        g\bigl(r(A,B)\cos\vartheta,\,
               r(A,B)\sin\vartheta,\,
               \ell(A,B)\bigr)\,d\vartheta .
\]
By the \(\kappa\)-oddness of \(g\), the conditional average over the sheet
\(L=-\ell(A,B)\) is the negative of this number. Since the two sheets have equal conditional weights, the two contributions cancel each other out. Therefore, for every
bounded Borel \(F\) on the \((A,B)\)-disc, we have
\[
        \int_{S^3}F(A,B)\,g(\Pi)\,d\sigma=0.
\]
It follows that
\[
        (\pi_P)_\#\bigl((g\circ\Pi)\sigma\bigr)=0,
\]
and hence
\[
        (\pi_P)_\#\mu_{\delta,g}=(\pi_P)_\#\mu_0 .
\]
Our construction uses exactly the conditional freedom that remains after
disintegration over the fibres of \(\pi_P\). This matches the context of \cite[Corollary 4.6 and Theorem 4.20]{BFS}: those uniqueness statements fix
the clockwise orientation class, and the anticlockwise statement follows by
reversing orientation. The polynomial example above, and, more generally, every
strictly positive tilt from Theorem \ref{thm:first-integral-tilts}, charges both \(\{L>0\}\) and \(\{L<0\}\). The odd cancellation under \(v\mapsto -v\) implies incompressibility, which shows that this result is not bound to a single orientation class.
\bibliographystyle{unsrt}
\bibliography{br}

\end{document}